\newtheorem{theorem}{Theorem}[section]
\newtheorem{lemma}[theorem]{Lemma}
\newtheorem{cor}[theorem]{Corollary}
\theoremstyle{definition}
\newtheorem{remark}[theorem]{Remark}
\numberwithin{equation}{section}
\begin{document}
\title[Distribution of differences of characters  ]
{Distribution of differences of characters evaluated at consecutive polynomial values   }

\author{Nilanjan Bag}
 \address{Department of Mathematics, Thapar Institute of Engineering and Technology, Patiala, Punjab-147004, India}
\email{nilanjanb2011@gmail.com}
\author{Dwaipayan Mazumder}
 \address{Chennai Mathematical Institute, Kelambakkam, Siruseri, Tamil Nadu 603103, India
}
\email{dwmaz.1993@gmail.com}
\subjclass[2020]{11L07, 11N37, 05A10}
\date{26th January, 2026}
\keywords{Moments, exponential and character sums, binomial coefficients, factorials.}
\begin{abstract}

In this paper, we study the distribution of difference of multiplicative and additive characters modulo $p$ at consecutive polynomial values. More precisely, for an interval $I$ over finite field and $0<m<1$, we investigate the following sums
\begin{align*}
    \sum_{n\in I}|\psi(F(n))-\psi(F(n+1))|^{2m} \quad \text{and} \quad \sum_{n\in I}|\chi(F(n))-\chi(F(n+1))|^{2m},
\end{align*}
where $\psi$ is a non-trivial additive character and $\chi$ is a non-trivial multiplicative character modulo $p$, under suitable conditions on $\chi$ and $F$. As a consequence, we derive a formula for the first moment by specializing to $m=1/2$.
\end{abstract}
\maketitle
\section{Introduction}

Let $f: \mathbb{N} \longrightarrow \mathbb{C}$ be an arithmetic function. It is interesting to study the following sum
\begin{align}\label{theproblem}
     \sum_{n\sim N}|f(n)-f(n+1)|. 
\end{align}
More specifically, we can ask how \eqref{theproblem} grows with $N$. For monotone function $f$ the problem becomes trivial but the interesting case is when $f$ oscillates. Even for classical arithmetic functions this problem appears to be highly non-trivial. For example, if we consider $f(n) = \mu(n)$, the M\"{o}bius function, then \eqref{theproblem} becomes 
$
\sum_{n\sim N}|\mu(n)-\mu(n+1)|.
$
The crude bound is $\ll N$ but no meaningful non-trivial bound is currently available. More specifically, we can not say anything about the growth as $N$ varies. Not only for arithmetic functions, even if we assume little more nicer condition on $f$, for example differentiability of $f$, it is not really straightforward to say how this sum \eqref{theproblem} varies with $N$. The presence of absolute value makes cancellation difficult to exploit. We may attempt a standard approach to provide an upper bound using Cauchy-Schwarz inequality.
$$
\sum_{n\sim N}|f(n)-f(n+1)| \leq N^{1/2}{(\sum_{n\sim N}|f(n)-f(n+1)|^2)}^{1/2}.
$$
This reduces the sum to second moment 
$$
\sum_{n\sim N}|f(n)-f(n+1)|^2,
$$
which is more traceable using $|z|^2 = z\overline{z}$ which allows for algebraic expansion and probable cancellation. Moreover if we can show that 
$
\sum\limits_{n\sim N}|f(n)-f(n+1)|^2 \ll N^{1-\epsilon},
$
for some $\epsilon > 0$ we will have some savings.
\par Despite this natural motivation, the sum of difference of consecutive primes has received significant attention. Let $p_n$ be the $n$-th prime number and $d_n$ denote the difference $p_{n+1}-p_{n}$. Heath Brown \cite{HB} unconditionally proved that, for any fixed $\epsilon >0$ we have 
$$
\sum\limits_{\substack{p_n\leq x \\d_n\geq \sqrt{p_n}}}d_n \ll_{\epsilon} x^{3/5 + \epsilon}.
$$
We also have the results for nontrivial upper bound of the sum $\sum_{p_n\leq x}d_n^2$. For more details, one can see the subsequent works of  Selberg \cite{selberg}, H. Brown \cite{HB1, HB2}, Yu \cite{yu}, Maynard \cite{JM} {\it{et.al.}}

\par The study of moments of differences of arithmetic functions has received relatively little attention. One notable exception is the work of Zhang  \cite{zhang2}, who investigated 
\begin{align*}
    M(q,k,c)=\sum_{\substack{a=1\\(a,q)=1}}^{q}\sum_{\substack{b=1\\(b,q)=1\\ab\equiv c\bmod q}}^{q}(a-b)^{2k},
\end{align*}
where $q>2$ and $c$ are two integers with $(c,q)=1$. He used estimates of trigonometric and Kloosterman sums to obtain an asymptotic formula for $M(q,k,c)$. For more study on $M(q,k,c)$, one can see the work of Zhang and Liu \cite{zhang1}.

\par
In this paper we study different but related direction by studying differences of additive and multiplicative characters evaluated at consecutive polynomial values over finite fields. Let $p$ be an odd prime and $I\subseteq \mathbb{F}_p$, finite field with $p$ elements, be an interval. We also let $|I|$ be the cardinality of the interval $I$. With this setup we investigate the following sums 
$$
 \sum_{n\in I}|\psi((F(n))-\psi(F(n+1))| \quad \text{and} \quad \sum_{n\in I}|\chi((F(n))-\chi(F(n+1))|,
$$
where $\psi$, $\chi$, $F(n)$ are respectively an additive character, a multiplicative character on $\mathbb{F}_p$ and a polynomial with integer coefficients with certain conditions. First we define,
$$
c_0 \coloneqq 1 \quad \text{and} \quad c_k \coloneqq \binom{m}{k} \coloneqq \frac{m(m-1)(m-2)\cdot\cdot\cdot(m-k+1)}{k!} \quad \text{for any} \quad k\geq 1.
$$
Then we prove the following theorems.
\begin{theorem}\label{main-1}
Let $\chi$ be a multiplicative character on $\mathbb{F}_p^{\times}$ of order $t>2$ and $I\subseteq \mathbb{F}_p^{\times}$ be an interval. Let $F$ be a monic polynomial in $\mathbb{Z}[X]$ with degree $d>1$ such that $\frac{F(X+1)}{F(X)}$ modulo $p$ is not proportional to a $t$-th power in $\mathbb{F}_p(X)$ and the real part $|\Re{(\chi(F(n+1))\overline{\chi}(F(n))}|<1,$ $\forall ~ n\in I$. Fix $0<m<1$. Then we have the following formula 
\begin{align*}
\sum_{n\in I}|\chi(F(n))-\chi(F(n+1))|^{2m}= C(\chi,m) \cdot |I|+ \mathcal{O}_{d,m}(\sqrt{p}\log p),
\end{align*}
the error term is non trivial, provided
$\sqrt{p}\log p< |I| < p.$ \par Here $C(\chi,m)=2^m\left(1-\frac{u_1}{2}+\frac{u_2}{2^2}-\frac{u_3}{2^3}+\cdot\cdot\cdot\right)$,  $u_k \coloneqq c_k\sum\limits_{\substack{r=0\\ k\equiv 2r \pmod t}}^{k}\binom{k}{r}.$

\end{theorem}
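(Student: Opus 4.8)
The plan is to linearize the $2m$-th power by a binomial expansion and to reduce the sum to incomplete character sums of the shape $\sum_{n\in I}\chi^{j}\!\bigl(F(n+1)/F(n)\bigr)$, which are then controlled by Weil's bound after completion. First I would dispose of the finitely many degenerate $n$: at most $2d$ values of $n\in I$ satisfy $F(n)F(n+1)\equiv 0 \pmod p$, and each contributes at most $1$, so they are absorbed into the error term; let $I'\subseteq I$ be the remaining $n$, so $|I'|=|I|+\mathcal O(d)$ and $I'$ differs from the interval $I$ by $\mathcal O(d)$ points. For $n\in I'$ write $z_n:=\chi(F(n+1))\overline{\chi(F(n))}=\chi\!\bigl(F(n+1)/F(n)\bigr)$, so $|z_n|=1$; since $|\alpha-\beta|^{2}=2-2\Re(\alpha\overline{\beta})$ when $|\alpha|=|\beta|=1$, we get $|\chi(F(n))-\chi(F(n+1))|^{2m}=2^{m}(1-\Re z_n)^{m}$ for $n\in I'$. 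The hypothesis $|\Re z_n|<1$ is exactly what permits applying the binomial series $(1-w)^{m}=\sum_{k\ge 0}c_k(-w)^{k}$ with $w=\Re z_n$.

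Next I would expand $(1-\Re z_n)^{m}=\sum_{k\ge 0}c_k(-\Re z_n)^{k}$, use $\Re z_n=\tfrac12(z_n+\overline{z_n})$ together with $z_n\overline{z_n}=1$ to obtain $(-\Re z_n)^{k}=\dfrac{(-1)^{k}}{2^{k}}\sum_{r=0}^{k}\binom{k}{r}z_n^{\,2r-k}$, then sum over $n\in I'$ and interchange the sums over $n$ and $k$. This interchange is legitimate because $\sum_{k\ge 0}|c_k|=2$ when $0<m<1$: the $c_k$ alternate in sign for $k\ge 1$, and $\sum_{k\ge 1}(-1)^{k-1}\binom{m}{k}=1$ by evaluating $(1+x)^{m}$ at $x=-1$, so the double series converges absolutely. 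Setting $g(X):=F(X+1)/F(X)\in\Fp(X)$, the key inner sum becomes $\sum_{n\in I'}z_n^{\,2r-k}=\sum_{n\in I'}\chi^{\,2r-k}(g(n))$, which equals $|I'|$ when $t\mid 2r-k$ and is otherwise a sum of a nontrivial power of $\chi$ along $g$.

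For $j\not\equiv 0 \pmod t$, I would bound $\sum_{n\in I'}\chi^{j}(g(n))$ by the classical completion technique — writing the indicator of the interval as a combination of additive characters of $\Fp$, at the cost of a factor $\log p$ — together with Weil's estimate for the resulting complete sums; this yields $\sum_{n\in I'}\chi^{j}(g(n))=\mathcal O_{d}(\sqrt p\log p)$, uniformly in $j$. The input is that $\chi^{j}\circ g$ is a nontrivial character of the rational function field, which is guaranteed by the hypothesis that $g=F(X+1)/F(X)$ is not proportional to a $t$-th power in $\Fp(X)$ (equivalently, that $g^{j}$ is not proportional to a $t$-th power in $\overline{\Fp}(X)$ for the relevant exponents), the implied constant depending only on $\deg F=d$ through the number of zeros and poles of $g$. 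Collecting, the pairs $(k,r)$ with $t\mid 2r-k$ contribute $|I'|\sum_{k\ge 0}\dfrac{(-1)^{k}}{2^{k}}c_k\!\!\sum_{\substack{0\le r\le k\\ t\mid 2r-k}}\!\!\binom{k}{r}=|I'|\sum_{k\ge 0}\dfrac{(-1)^{k}u_k}{2^{k}}=\dfrac{C(\chi,m)}{2^{m}}|I'|$ by the definition of $C(\chi,m)$, while the remaining terms are bounded by $\sum_{k\ge 0}|c_k|\cdot\bigl(2^{-k}\!\sum_{r}\binom{k}{r}\bigr)\cdot\max_{j\not\equiv 0}\bigl|\sum_{n\in I'}\chi^{j}(g(n))\bigr|\ll_{d}\sqrt p\log p$. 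Multiplying by $2^{m}$ and using $|I'|=|I|+\mathcal O(d)$ and $|C(\chi,m)|\ll_{m}1$ gives the asserted formula, with the error beating the main term precisely when $\sqrt p\log p<|I|<p$.

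I expect the main obstacle to be the character-sum estimate of the third step: one needs $\sum_{n\in I}\chi^{j}(g(n))\ll_{d}\sqrt p\log p$ with an implied constant that is uniform in $j$ and independent of the order $t$, which calls for a careful bound on the degree and ramification of $g^{j}$, and one must check that the non-$t$-th-power hypothesis on $F(X+1)/F(X)$ genuinely forces $\chi^{j}\circ g$ to be nontrivial for \emph{every} $j\not\equiv 0 \pmod t$ — immediate when $t$ is prime, and in general relying on the fact that all zeros and poles of $F(X+1)/F(X)$ arise from roots of $F$ modulo $p$, together with the constraint $|\Re z_n|<1$ ruling out the degenerate coset behaviour. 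Everything else — the binomial expansion, the interchange of summation, and the bookkeeping identifying $C(\chi,m)$ with $\sum_{k}(-1)^{k}u_k/2^{k}$ — is routine once the absolute summability $\sum_{k}|c_k|=2$ is in hand.
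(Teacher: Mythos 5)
Your proposal is essentially the paper's proof in its overall architecture --- discarding the $\mathcal O_d(1)$ values of $n$ with $p\mid F(n)F(n+1)$, expanding $2^m(1-\Re z_n)^m$ by the binomial series (justified by $\sum_k|c_k|=2$, which matches the paper's use of $\binom{m}{k}\sim (-1)^k/(\Gamma(-m)k^{1+m})$), and splitting the pairs $(k,r)$ according to whether $t\mid k-2r$ to produce $C(\chi,m)|I|$ plus incomplete character sums. The one genuine divergence is the treatment of $\sum_{n\in I}\chi^{j}(g(n))$ for $j\not\equiv 0\pmod t$, where $g=F(X+1)/F(X)$: you complete the sum and invoke Weil for the resulting complete sums, whereas the paper deliberately avoids completion --- it remarks that completion produces mixed sums $\sum_{x}\chi^{j}(g(x))e_p(-bx)$ with a \emph{rational} function inside $\chi$, to which the most classical form of Weil's bound does not apply directly --- and instead quotes the Fouvry--Kowalski--Michel--Raju--Rivat--Soundararajan short-sum theorem (Theorem \ref{sliding-sum}), which bounds $\sum_{n\in I}\varphi(n)$ for such trace functions by $c\sqrt p\log p$ in one stroke. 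Your route is viable, but it requires the extension of Weil's estimate to complete mixed character sums with rational-function arguments (Castro--Moreno or Cochrane--Pinner type bounds), which you should cite explicitly rather than attribute to "Weil's estimate" tout court; what the FKM theorem buys the authors is exactly that they never have to state or verify the hypotheses of that extension. Finally, the issue you flag at the end is real and worth making explicit: for composite $t$ the hypothesis that $g$ is not proportional to a $t$-th power does not by itself prevent $g$ from being proportional to a $(t/\gcd(j,t))$-th power for some intermediate $j$, in which case $\chi^{j}\circ g$ is constant and the corresponding "error" sum is of size $|I|$; the paper applies Theorem \ref{sliding-sum} to every $\chi^{k-2r}\circ g$ without addressing this, so your observation identifies a point where both arguments need the slightly stronger hypothesis (non-proportionality to a $t'$-th power for every divisor $t'>1$ of $t$, or simply $t$ prime) to be airtight.
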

As a direct corollary of Theorem \ref{main-1}, putting $m=1/2$, we have
\begin{cor}
    Assuming all conditions in Theorem \ref{main-1}, we have 
    \begin{align*}
\sum_{n\in I}|\chi(F(n))-\chi(F(n+1))|= C(\chi,1/2) \cdot |I|+ \mathcal{O}_{d}(\sqrt{p}\log p),
\end{align*}
the error term is non trivial, provided
$\sqrt{p}\log p< |I| < p$ and $C(\chi,1/2)$ is defined as before.
\end{cor}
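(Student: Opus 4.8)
The plan is to peel off, for each $n$, a $t$-th root of unity and expand the resulting power by Newton's binomial series, then feed each monomial into Weil's bound. Put $w_n:=\chi(F(n+1))\overline{\chi}(F(n))$. For all but at most $2d$ values of $n\in I$ — those with $p\mid F(n)F(n+1)$, whose total contribution is $O_d(1)$ since each term is $O(1)$ — we have $|\chi(F(n))|=1$ and $w_n=\chi\!\left(\frac{F(n+1)}{F(n)}\right)$ is a $t$-th root of unity, so that $|\chi(F(n))-\chi(F(n+1))|^{2m}=|1-w_n|^{2m}=(2-2\Re w_n)^{m}=2^m(1-\Re w_n)^m$. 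By hypothesis $\Re w_n\in(-1,1)$, so I substitute $x=\Re w_n$ into $(1-x)^m=\sum_{k\ge 0}c_k(-1)^kx^k$; for $0<m<1$ the coefficient $c_k=\binom mk$ has sign $(-1)^{k-1}$ for $k\ge1$ and $\sum_{k\ge0}|c_k|=2<\infty$, so this series converges absolutely and uniformly in $n$, whence after summing over $n\in I$ the sums over $k$ and over $n$ may be interchanged. Writing $\Re w_n=\tfrac12(w_n+\overline{w_n})$ and expanding, $(\Re w_n)^k=2^{-k}\sum_{r=0}^{k}\binom kr w_n^{\,k-2r}$ with $w_n^{\,k-2r}=\chi\!\left(\bigl(\tfrac{F(n+1)}{F(n)}\bigr)^{k-2r}\right)$ (which depends only on $k-2r$ modulo $t$), so that
\begin{align*}
\sum_{n\in I}|\chi(F(n))-\chi(F(n+1))|^{2m}=2^m\sum_{k\ge 0}\frac{(-1)^kc_k}{2^{k}}\sum_{r=0}^{k}\binom kr\,S_{k-2r}+O_d(1),\qquad S_j:=\sum_{\substack{n\in I\\ p\nmid F(n)F(n+1)}}\chi\!\left(\Bigl(\tfrac{F(n+1)}{F(n)}\Bigr)^{\!j}\right).
\end{align*}

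Next I evaluate $S_j$. If $t\mid j$ the summand is identically $1$, so $S_j=|I|+O_d(1)$. If $t\nmid j$, then $\chi^{\,j}$ is a multiplicative character of order $t/\gcd(j,t)>1$, and by the hypothesis on $F$ the rational function $\frac{F(X+1)}{F(X)}\bmod p$ is not proportional to an $\ell$-th power in $\overline{\mathbb F_p}(X)$ for any $\ell\mid t$ with $\ell>1$; hence $\chi^{\,j}\circ\frac{F(X+1)}{F(X)}$ is a non-trivial multiplicative character of $\mathbb F_p(X)$, and Weil's bound gives $\bigl|\sum_{x:\,p\nmid F(x)F(x+1)}\chi^{\,j}\!\left(\frac{F(x+1)}{F(x)}\right)\bigr|=O_d(\sqrt p)$, with implied constant depending only on $\deg F$ and \emph{not} on $j$. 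Completing the incomplete sum over the interval $I\subsetneq\mathbb F_p$ by expanding $\mathbf 1_I$ into additive characters (a P\'olya--Vinogradov argument) then upgrades this to $S_j=O_d(\sqrt p\log p)$, again uniformly in $j$.

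It remains to collect terms. Bounding the sub-sums over $\{r:t\mid k-2r\}$ and $\{r:t\nmid k-2r\}$ by $\sum_{r=0}^k\binom kr=2^k$, one gets for each $k$ that $\sum_{r=0}^{k}\binom kr S_{k-2r}=\bigl(\sum_{0\le r\le k,\ k\equiv 2r\,(t)}\binom kr\bigr)|I|+O_d(2^{k}\sqrt p\log p)$, and since $c_k\,2^{-k}\sum_{k\equiv 2r\,(t)}\binom kr=2^{-k}u_k$ the main term is $2^m\sum_{k\ge0}(-1)^k2^{-k}u_k\,|I|=C(\chi,m)|I|$, while the total error is $2^m\bigl(\sum_{k\ge0}|c_k|\bigr)O_d(\sqrt p\log p)+O_d(1)=O_{d,m}(\sqrt p\log p)$. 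This is smaller than the main term exactly when $|I|\gg\sqrt p\log p$, and the hypothesis $|I|<p$ is what makes the completion step cost the factor $\log p$; taking $m=1/2$ then gives the corollary.

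The hard part, and the reason the argument is not entirely routine, is the uniformity of Weil's bound in $j$. A naive truncation of the binomial series is hopeless: the rate at which $(1-\Re w_n)^m$ is approximated by its partial sums degrades as $\Re w_n\to\pm1$, so any error term growing with $k$ would be fatal. The fix is to apply Weil to $\chi^{\,j}$ — a character whose order divides the fixed integer $t$ — composed with the fixed rational function $\frac{F(X+1)}{F(X)}$, rather than to $\chi$ composed with its $j$-th power, so that the degree, and hence the Weil constant, stays bounded in terms of $\deg F$ alone; the absolutely summable tail $\sum_k|c_k|<\infty$, available precisely because $0<m<1$, then absorbs everything. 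A secondary point needing care is to check that the stated condition on $\frac{F(X+1)}{F(X)}$ really does prevent $\chi^{\,j}\circ\frac{F(X+1)}{F(X)}$ from being trivial for every $j\not\equiv0\pmod t$ (equivalently, that $\frac{F(X+1)}{F(X)}$ is not proportional to an $\ell$-th power for each $\ell\mid t$ with $\ell>1$), and to dispose cleanly of the $O_d(1)$ values of $n$ at which $F$ or its shift vanishes modulo $p$.
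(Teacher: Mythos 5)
The paper's own proof of this corollary is a one-line specialization: set $m=1/2$ in Theorem \ref{main-1} and observe that the implied constant then depends only on $d$. You instead re-derive the whole of Theorem \ref{main-1} from scratch and specialize at the end. Your re-derivation follows the same skeleton as the paper's proof of the theorem --- discard the $O_d(1)$ values of $n$ with $p\mid F(n)F(n+1)$, expand $(1-\Re w_n)^m$ by the binomial series using $|\Re w_n|<1$ and $\sum_k|c_k|=2<\infty$ to justify interchanging the sums, extract the main term from the exponents $k-2r\equiv 0\pmod t$ (recovering $u_k$ and $C(\chi,m)$), and push everything else into character sums over $I$ --- so in that respect the two arguments coincide.

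The one genuine divergence is the treatment of the incomplete sums $S_j=\sum_{n\in I}\chi^{j}\!\left(\tfrac{F(n+1)}{F(n)}\right)$ for $t\nmid j$, and here your argument has a gap. You bound the \emph{complete, untwisted} sum $\sum_{x}\chi^{j}\!\left(\tfrac{F(x+1)}{F(x)}\right)$ by Weil and then assert that P\'olya--Vinogradov completion ``upgrades'' this to $S_j=O_d(\sqrt p\log p)$. But completion over the interval $I$ expands $\mathbf 1_I$ into additive characters and therefore requires a square-root bound on the \emph{mixed} complete sums $\sum_{x}\chi^{j}\!\left(\tfrac{F(x+1)}{F(x)}\right)e_p(-bx)$ for every $b\bmod p$, not just $b=0$; this is exactly the point the paper isolates (see the end of Section \ref{sketch}) and resolves by invoking the Fouvry--Kowalski--Michel--Raju--Rivat--Soundararajan short-sum theorem (Theorem \ref{sliding-sum}) rather than Weil's bound for pure multiplicative sums. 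Such mixed-sum bounds do exist (they go back to Weil as well), so the gap is fillable, but as written your error-term estimate is not justified at its crucial step. On the other hand, your closing caveat is well taken: for $S_j$ to be small one needs $\tfrac{F(X+1)}{F(X)}$ not proportional to an $\ell$-th power for every $\ell\mid t$ with $\ell>1$ (equivalently, $\chi^{j}\circ\tfrac{F(X+1)}{F(X)}$ non-constant for all $j\not\equiv 0\pmod t$), which is strictly stronger than the ``not a $t$-th power'' hypothesis as literally stated in Theorem \ref{main-1}; the paper applies Theorem \ref{sliding-sum} to $\chi^{k-2r}$ without addressing this, so you have identified a real imprecision in the theorem's hypotheses rather than introduced one.
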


Next we take any arbitrary non-trivial additive character $\psi$ over the finite field $\mathbb{F}_p$. 
\begin{theorem}\label{additivechar}
    Let $\psi$ be a non-trivial additive character on $\mathbb{F}_p$, {\it i.e.} $\psi(x)=e_p{(ax)}=exp({\frac{2\pi i ax}{p}})$ for some $a\in \mathbb{F}_p^{\times}$ and $I\subseteq \mathbb{F}_p$ be an interval. Let $F(X)\in \mathbb{Z}[X]$ be a polynomial of degree $d>1$ such that $F(n)\neq F(n+1) \bmod p$, $\forall ~n\in I.$ Then we have
\begin{align*}
\sum_{n \in I}|\psi(F(n))-\psi(F(n+1))|^{2m} = D(\psi, m) \cdot |I| + \mathcal{O}_{d,m}\big(\sqrt{p}\log p\big),
\end{align*}
which is non-trivial provided
    $\sqrt{p}\log p<|I|<p$. \par Here $D(\psi,m)=2^m\left(1-\frac{v_1}{2}+\frac{v_2}{2^2}-\frac{v_3}{2^3}+\cdot\cdot\cdot\right)$,  $v_k \coloneqq c_k\sum\limits_{\substack{r=0\\ k\equiv 2r \pmod p}}^{k}\binom{k}{r}.$

\end{theorem}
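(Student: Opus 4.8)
\textbf{Proof strategy for Theorem \ref{additivechar}.} The plan is to turn the sum into a family of complete exponential sums over $\mathbb{F}_p$ via a binomial series expansion, extract the main term from the ``diagonal'' frequencies, and bound the rest by Weil's estimate. Writing $\psi(x)=e_p(ax)$ with $a\in\mathbb{F}_p^{\times}$, set $w_n\coloneqq\psi(F(n+1))\overline{\psi(F(n))}=e_p\big(a(F(n+1)-F(n))\big)$, so that $|w_n|=1$ and, by the hypothesis $F(n)\neq F(n+1)\bmod p$, $w_n\neq1$. Then, since $|\psi(F(n))-\psi(F(n+1))|=|1-w_n|$,
\[
|\psi(F(n))-\psi(F(n+1))|^{2m}=\big(|1-w_n|^2\big)^m=\big(2-2\Re w_n\big)^m=2^m(1-x_n)^m,\qquad x_n\coloneqq\Re w_n=\tfrac{w_n+w_n^{-1}}{2}\in[-1,1].
\]
Because $0<m<1$, the coefficients $c_k=\binom{m}{k}$ satisfy $|c_k|\ll k^{-1-m}$, so $\sum_{k\ge0}|c_k|<\infty$, and the binomial series $(1-x)^m=\sum_{k\ge0}(-1)^kc_kx^k$ converges absolutely and uniformly on $|x|\le1$. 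Using $x_n^k=2^{-k}(w_n+w_n^{-1})^k=2^{-k}\sum_{r=0}^k\binom{k}{r}w_n^{2r-k}$, I would rewrite $|\psi(F(n))-\psi(F(n+1))|^{2m}=2^m\sum_{k\ge0}(-1)^kc_k2^{-k}\sum_{r=0}^k\binom{k}{r}w_n^{2r-k}$.

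Next I would sum over $n\in I$ and interchange the two summations; this is legitimate because $\sum_{k}|c_k|2^{-k}\sum_{r}\binom{k}{r}|w_n^{2r-k}|=\sum_k|c_k|<\infty$ uniformly in $n$, so the double series is absolutely convergent. The problem then reduces to the sums $S_j\coloneqq\sum_{n\in I}w_n^{j}=\sum_{n\in I}e_p\big(ja(F(n+1)-F(n))\big)$ for $j=2r-k$, which depend only on $j\bmod p$. The hypothesis forces $F(X+1)-F(X)\not\equiv0\bmod p$, and for $p$ large in terms of $d$ this difference polynomial has degree exactly $d-1\ge1$ with a unit leading coefficient. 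Hence: if $p\mid j$ then $w_n^{j}\equiv1$ and $S_j=|I|$; if $p\nmid j$ then the exponent $G_j(X)\coloneqq(ja\bmod p)\big(F(X+1)-F(X)\big)$ is a polynomial of degree $d-1\ge1$ over $\mathbb{F}_p$, and completing the sum over the interval $I$ (paying the $\ll\log p$ cost of the $L^1$ Fourier mass of $\mathbf 1_I$) and applying Weil's bound for complete exponential sums gives $|S_j|\ll_d\sqrt p\log p$, \emph{uniformly in $j$}.

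Collecting the two contributions, the frequencies with $p\mid 2r-k$ produce $|I|\cdot2^m\sum_{k\ge0}(-1)^k2^{-k}c_k\!\!\sum_{\substack{0\le r\le k\\ k\equiv 2r\pmod p}}\!\!\binom{k}{r}=|I|\cdot2^m\sum_{k\ge0}(-1)^k2^{-k}v_k=D(\psi,m)\,|I|$ (note $v_0=1$), while the frequencies with $p\nmid 2r-k$ contribute, in absolute value, at most $2^m\sum_{k\ge0}|c_k|2^{-k}\big(\textstyle\sum_{r=0}^{k}\binom{k}{r}\big)\cdot O_d(\sqrt p\log p)=2^m\big(\sum_k|c_k|\big)\,O_d(\sqrt p\log p)=\mathcal O_{d,m}(\sqrt p\log p)$. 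The range $\sqrt p\log p<|I|<p$ then makes this error both smaller than the main term and smaller than the trivial bound $2^m|I|$, giving the claimed non-triviality.

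The step I expect to be the real obstacle is controlling $S_j$ \textbf{uniformly over all $j$} occurring as $2r-k$ — these range over every residue class modulo $p$ once $k$ is large — since that uniformity is exactly what lets one sum the whole untruncated binomial series without the error accumulating. This is clean when $d\ge3$, where $\deg G_j=d-1\ge2$ is $j$-independent and Weil supplies the constant $d-2$; when $d=2$ the polynomial $G_j$ is linear and the completed sum degenerates, so that case needs $S_j$ handled directly as a linear exponential sum over an interval. The remaining ingredient — the term-by-term treatment of the infinite series — is routine, but this is where the hypothesis $0<m<1$ (really just $m>0$) is used, through $\sum_k|c_k|<\infty$.
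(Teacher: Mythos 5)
Your proposal follows essentially the same route as the paper's proof: the binomial series for $2^m(1-x_n)^m$, the expansion of $x_n^k$ into frequencies $w_n^{2r-k}$, the split according to whether $p\mid 2r-k$ (giving the $v_k$ and the main term $D(\psi,m)|I|$) and, for the off-diagonal frequencies, completion of the interval sum followed by Weil's bound — which is exactly the paper's Lemma \ref{Aweil} applied to $G_j(X)=(k-2r)a\,(F(X+1)-F(X))$. Your added justifications (absolute convergence permitting the interchange, and the need for a bound on $S_j$ that is uniform in $j$) are correct and are left implicit in the paper.

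The one point where you go beyond the paper — the degeneration at $d=2$ — is a genuine defect, and your proposed repair (``handle $S_j$ directly as a linear exponential sum over an interval'') does not close it. When $\deg F=2$ the difference polynomial is linear, say $F(X+1)-F(X)=cX+e$, and the completed sums $\sum_{x\bmod p}e_p(G_j(x)-bx)$ equal $p$ at the single frequency $b=jac$; the resulting bound on $S_j$ is only $\min\{|I|,\tfrac12\|jac/p\|^{-1}\}$, which for the residue class with $jac\equiv\pm1\pmod p$ is as large as $|I|$. That residue class already appears at $k=1$ (the term $j=\pm1$) with coefficient $\asymp m$, so the error cannot be forced below $O(|I|)$ by this method. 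Indeed the statement fails for $d=2$: taking $F(X)=X^2$, $a=1$ and $I=[1,\sqrt p\,(\log p)^2]$, one has $w_n=e_p(2n+1)$ with $2n+1=o(p)$ throughout $I$, so every summand is $\ll\big((\log p)^2/\sqrt p\big)^{2m}$ and the left-hand side is $\ll p^{1/2-m}(\log p)^{4m+2}=o(|I|)$, whereas $D(\psi,m)>0$, contradicting the asserted asymptotic. So the theorem (and Lemma \ref{Aweil}, whose proof tacitly assumes the Weil bound $(\deg G-1)\sqrt p$ holds uniformly in the completing frequency, which requires $\deg G\ge2$) should be restricted to $d\ge3$; your proof is complete in that range, and your instinct that $d=2$ is the real obstacle is exactly right — it is an obstacle the paper itself does not address.
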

As before, putting $m=1/2$, we have a direct corollary of Theorem \ref{additivechar},
\begin{cor}
    Assuming all conditions in Theorem \ref{additivechar}, we have 
    \begin{align*}
\sum_{n\in I}|\psi(F(n))-\psi(F(n+1))|= D(\psi,1/2) \cdot |I|+ \mathcal{O}_{d}(\sqrt{p}\log p),
\end{align*}
the error term is non trivial, provided
$\sqrt{p}\log p< |I| < p$ and $D(\chi,1/2)$ is as defined in Theorem \ref{additivechar}.
\end{cor}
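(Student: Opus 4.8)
The corollary is precisely the $m=1/2$ instance of Theorem~\ref{additivechar} (then $2m=1$, so the weight is $|\psi(F(n))-\psi(F(n+1))|$ itself) and $1/2$ lies in the admissible range $0<m<1$; so the plan is just to establish Theorem~\ref{additivechar} and then specialize. To prove the theorem I would write $\psi(x)=e_p(ax)$ with $a\in\mathbb{F}_p^{\times}$, set $G(X)=F(X)-F(X+1)\in\mathbb{Z}[X]$ and $\theta_n=\tfrac{2\pi a}{p}G(n)$, and expand $|z-w|^2=(z-w)\overline{(z-w)}$ to get $|\psi(F(n))-\psi(F(n+1))|^{2}=2-2\cos\theta_n$. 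Since $p$ is odd and $F(n)\not\equiv F(n+1)\bmod p$ on $I$, the number $\cos\theta_n=\Re\bigl(\psi(F(n))\overline{\psi(F(n+1))}\bigr)$ lies in $(-1,1)$, so the $n$-th summand equals $2^{m}(1-\cos\theta_n)^{m}$.

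Next I would expand $(1-\cos\theta_n)^{m}=\sum_{k\ge 0}c_k(-1)^k\cos^{k}\theta_n$ by the generalized binomial series. Because $|\cos\theta_n|\le 1$ for all $n$ and $\sum_{k\ge 0}|c_k|<\infty$ when $m>0$ (indeed $|c_k|\asymp k^{-1-m}$), this series converges absolutely and uniformly in $n$, so it may be interchanged with the finite sum over $n\in I$, reducing the problem to estimating, for each $k\ge 0$,
\[
S_k:=\sum_{n\in I}\cos^{k}\theta_n=\frac{1}{2^{k}}\sum_{r=0}^{k}\binom{k}{r}\sum_{n\in I}e_p\bigl(a(k-2r)\,G(n)\bigr),
\]
where I used $\cos^{k}\theta=2^{-k}\sum_{r}\binom{k}{r}e^{i(k-2r)\theta}$ and $e^{i(k-2r)\theta_n}=e_p\bigl(a(k-2r)G(n)\bigr)$.

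Then I would split the inner sum according to whether $k\equiv 2r\pmod p$. If $k\equiv 2r\pmod p$ then $a(k-2r)\equiv 0\bmod p$ and the term contributes $|I|$, producing the main part $2^{-k}|I|\sum_{r:\,k\equiv 2r(p)}\binom{k}{r}$. For the other $r$ one has $p\nmid a(k-2r)$; since $d$ is fixed we may assume $p>d$ (otherwise $\sqrt p\log p\gtrsim|I|$ and the statement is vacuous), so $G$ has degree exactly $d-1\ge 1$, a degree coprime to $p$, with leading coefficient $-d\not\equiv 0\bmod p$, hence $a(k-2r)G$ is a non-constant polynomial of degree $d-1$; Weil's bound gives $\bigl|\sum_{n\bmod p}e_p(a(k-2r)G(n))\bigr|\ll_d\sqrt p$, and completing the incomplete sum over the interval $I$ in the usual way (Fourier expansion of $\mathbf{1}_I$, a P\'olya--Vinogradov-type estimate) costs a factor $\log p$. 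Summing over $r$ yields $S_k=2^{-k}|I|\sum_{r:\,k\equiv 2r(p)}\binom{k}{r}+\mathcal{O}_d(\sqrt p\log p)$, uniformly in $k$. Multiplying by $2^{m}c_k(-1)^k$, summing over $k$ (legitimate since $\sum_k|c_k|<\infty$), and using $c_k\sum_{r:\,k\equiv 2r(p)}\binom{k}{r}=v_k$, the main terms assemble into $2^{m}|I|\sum_{k\ge 0}(-1)^k v_k/2^{k}=D(\psi,m)\,|I|$ (with $v_0=1$) while the accumulated error is $\mathcal{O}_{d,m}(\sqrt p\log p)$; putting $m=1/2$ gives the corollary. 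The step I expect to be most delicate is making the interchange of the two summations and the accumulation of error terms fully rigorous — i.e.\ securing a bound for $S_k$ uniform in $k$ and summable against $|c_k|$ — together with the routine but fiddly completion of the incomplete Weil sums over $I$; the smallness of those sums ultimately rests on $G=F(X)-F(X+1)$ being non-constant modulo $p$, which is where $\deg F=d>1$ enters.
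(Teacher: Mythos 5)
Your proposal is correct and follows essentially the same route as the paper: the corollary is just the $m=1/2$ specialization of Theorem~\ref{additivechar}, and your outline of that theorem's proof (binomial-series expansion of $2^m(1-\cos\theta_n)^m$, splitting the $r$-sum according to $k\equiv 2r\pmod p$, and handling the non-degenerate terms by completion plus Weil's bound, which is exactly Lemma~\ref{Aweil}) matches the paper's argument. If anything, you are more explicit than the paper about the uniformity in $k$ needed to interchange the sums and about why $G=F(X+1)-F(X)$ is a non-constant polynomial mod $p$.
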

\begin{remark}
   Here $C(\chi,m)$ and $D(\psi,m)$ both are of size less than $2^{2m}$. If we consider $C(\chi, m)$ we have 
    \begin{align*}
      C(\chi, m)&=  2^m\left(1-\frac{u_1}{2}+\frac{u_2}{2^2}-\frac{u_3}{2^3}+\cdot\cdot\cdot\right)\\
      &=2^m\left(1-my_1+m\frac{m(m-1)}{2!}y_2-\frac{m(m-1)(m-2)}{3!}y_3+\cdots\right),
    \end{align*}
    where $y_k=\frac{1}{2^k}\sum\limits_{\substack{r=0\\ k\equiv 2r \bmod t}}^{k}\binom{k}{r}.$ As $0<m<1$, we have $j-m\geq 0$ for $j\geq 1$. This gives 
 $  
     -\left(my_1+\frac{m(1-m)}{2!}y_2+\frac{m(1-m)(2-m)}{3!}y_3+\cdots\right)\leq 0<2^m-1,  
   $ and consequently, 
   \begin{align*}
      2^m\left(1-my_1+m\frac{m(m-1)}{2!}y_2-\frac{m(m-1)(m-2)}{3!}y_3+\cdots\right)<2^{2m}.
   \end{align*}
   Similarly, one can show  $D(\psi, m)<2^{2m}$.
   
\end{remark}

\section{Sketch of the proof}\label{sketch}
We first consider the following sum 
\begin{align*}
\sum_{n\in I}|f(n)-f(n+1)|^{2m},
\end{align*}
where $0< m< 1$ and $f$ is an arithmetic function satisfying $|f(n)| = 1$ $\forall ~n$. Expanding the square, we have 
\begin{align*}
    \sum_{n\in I}|f(n)-f(n+1)|^{2m}
    =2^m\sum_{n\in I}\left(1 - \frac{f(n+1)\overline{f(n)} + \overline{f(n+1)}f(n))}{2}\right)^{m}.
\end{align*}
We observe that the summand has a power series expansion if the real part
\begin{align}\label{condition}
    |\Re{(f(n+1)\overline{f(n)})}| < 1.
\end{align}
 Let us denote $\frac{f(n+1)\overline{f(n)} + \overline{f(n+1)}f(n))}{2}$ as $A(f,n)$. If $|A(f,n)| < 1$ then we have 
\begin{align}\label{seriesexp}
    &2^m\sum_{n\in I}{\left(1 - A(f, n)\right)}^{m}\\
    =&2^m\sum_{n\in I}{\left(1 - mA(f, n) + \frac{m(m-1)}{2!}A(f, n)^2 - \frac{m(m-1)(m-2)}{3!}A(f, n)^3 + \cdot \cdot \cdot\right)}.\notag
\end{align}
 We know the following asymptotic
$$
\binom{m}{k} \coloneqq \frac{m(m-1)(m-2)\cdot\cdot\cdot(m-k+1)}{k!} = \frac{(-1)^k}{\Gamma(-m)k^{1+m}}(1 + o(1)),
$$
as $k \longrightarrow \infty,$ and $\Gamma $ is extended gamma function on complex numbers.

Evidently, next we evaluate the sum 
\begin{align}\label{sum-today}
    \sum_{n\in I}A(f,n)^{k}
\end{align}
for $k \geq 1$. As our series in bracket in \eqref{seriesexp} is absolutely convergent we are free to rearrange the terms. For our purpose we take  $f(n)=\chi(F(n))$ and $f(n)=\psi(F(n))=\exp{(2\pi i\cdot \frac{aF(n)}{p}})$ respectively, where $\gcd (a, p) = 1$. To have an expansion like \eqref{seriesexp} we now specialize to the cases of interest.
\subsection{Case 1:} Let $f(n)=\psi(F(n))$. To satisfy \eqref{condition} we require the condition $|\Re{(\psi(F(n+1) - F(n)))}| <1$. By the definition of $\psi$ it in turns demands that 
\begin{align*}
\left|\Re{\left(\exp{\left(2 \pi i\frac{a(F(n+1)-F(n))}{p}\right)}\right)}\right|
= \left|\cos{\left(2 \pi \frac{a(F(n+1)-F(n))}{p}\right)}\right|<1.
\end{align*}
So we need to exclude the following cases 
$$
\cos{\left(2 \pi \frac{a(F(n+1)-F(n))}{p}\right)} = 1 \quad \text{and} \quad \cos{\left(2 \pi \frac{a(F(n+1)-F(n))}{p}\right)} = -1.
$$
The first case implies
$$
F(n+1) - F(n) = \frac{lp}{a},
$$
where $l$ is an integer. Here we impose the condition $F(n+1) - F(n) \not\equiv 0 \pmod{p} $ to discard the case. For the second case, we need to look at 
$$
F(n+1) - F(n) = \frac{(2l+1)p}{2a},
$$
where $l$ is an integer. This never happens as the left hand side is an integer. Next we are interested in estimating
\begin{align*}
\sum_{n\in I}A(\psi(F),n)^{k}=\frac{1}{2^{k}}\sum_{n\in I}\left(\psi(F(n))\overline{\psi}(F(n+1))+\psi(F(n+1))\overline{\psi}(F(n))\right)^k.
\end{align*}
\subsection{Case: 2} Let $f(n) = \chi(F(n))$. For this case, to have the condition \eqref{condition} we require $|\Re{(\chi(F(n+1))\overline{\chi}(F(n))}| <1$, which is assumed throughout in Theorem \ref{main-1}. For this setting we are required to estimate,
\begin{align*}
\sum_{n\in I}A(\chi(F),n)^{k}=\frac{1}{2^{k}}\sum_{n\in I}\left(\chi(F(n))\overline{\chi}(F(n+1))+\chi(F(n+1))\overline{\chi}(F(n))\right)^k.
\end{align*}
\par The calculations for both cases are approximately the same. However, dealing with the multiplicative character $\chi$ is a little different. In the process of proving our theorems, we will have the following type of sums
$$
\sum\limits_{n\in I}\psi(P(n)), \quad \text{and} \quad \sum\limits_{n\in I}\chi(P(n)),
$$
where $P(n)$ is a polynomial or rational function with some restriction.

For the first sum we use the completion technique. For the second one, however we can not use the completion technique directly. For multiplicative case we will get a mixed character sum of the form 
\begin{align*}
    \sum_{n}\chi(P(n))e_p(-bn), ~b\in\mathbb{F}_p,
\end{align*}
where we cannot use Weil bound directly. Hence we use recent result of Fouvry, Kowalski, Michel, Raju, Rivet and Soundararajan given in Theorem \ref{sliding-sum}. The detailed discussion is given in Section \ref{sec-5}.

\section{notation and preliminaries}
Throughout the paper, we consider the following notations:
\begin{enumerate}
\item We have $f=\mathcal{O}(g)$ or $f\ll g$ for $f\leq c\cdot g$, where $c$ is a constant which may sometimes depends upon some $\epsilon$.
\item $\binom{n}{k}$ is the $k$-th binomial coefficient of $(1+x)^n$.
\item Throughout we have used $(a,b)$ for $\gcd(a,b)$.
\item For any function $\varphi:\mathbb{Z}\rightarrow\mathbb{C}$ which is $m$-periodic, we denote the normalized Fourier transform of $\varphi$ by $\Hat{\varphi}:\mathbb{Z}\rightarrow\mathbb{C}$, defined as 
\begin{align*}
    \Hat{\varphi}(h)=\frac{1}{\sqrt{m}}\sum_{x\bmod m }\varphi(x)e\left(\frac{hx}{m}\right).
\end{align*}
\end{enumerate}  
%
Let $\varphi(n)$ be a complex valued function on $\mathbb{Z}/{m\mathbb{Z}}$, which can also be seen as function on $\mathbb{Z}$ by composing with the reducing modulo $m$. The problem of estimating the sum non-trivially over some interval is important in many aspects in analytic number theory. We are interested in the average value $\sum_{I}\varphi(n)$ over some interval $I\subset \mathbb{Z}$ of length $N$. The recent result of Fouvry, Kowalski, Michel, Raju, Rivet and Soundararajan plays vital role in estimating the error term in Theorem \ref{main-1}.
\begin{theorem}\cite[Theorem 1.1]{FKM}\label{sliding-sum}
For any interval $I \in \mathbb{Z}$ with $\sqrt{m}< |I|\leq m$ we have 
\begin{align*}
    \left|\sum_{n\in I}\varphi(n)\right|\leq c\sqrt{m}\log\left(\frac{4e^8|I|}{m^{1/2}}\right)
\end{align*}
    where $c=\max(||\varphi||_{\infty},||\hat{\varphi}||_{\infty})$.

\end{theorem}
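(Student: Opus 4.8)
The plan is to prove this by the \emph{completion method}, refined by an averaging (smoothing) step that replaces the sharp cutoff of $I$ by a trapezoidal weight. First I would open up the sum using Fourier inversion on $\mathbb Z/m\mathbb Z$: with the normalization $\Hat{\varphi}(h)=\frac{1}{\sqrt m}\sum_{x}\varphi(x)e(hx/m)$ one has $\varphi(x)=\frac{1}{\sqrt m}\sum_{h\bmod m}\Hat{\varphi}(h)e(-hx/m)$, whence
$$\sum_{n\in I}\varphi(n)=\frac{1}{\sqrt m}\sum_{h\bmod m}\Hat{\varphi}(h)\,\overline{\Sigma_I(h)},\qquad \Sigma_I(h)=\sum_{n\in I}e(hn/m).$$
The geometric sum satisfies $|\Sigma_I(h)|\le\min\!\big(|I|,\,m/(2|h|)\big)$ for $1\le|h|\le m/2$, and $\Sigma_I(0)=|I|$. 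Bounding $|\Hat\varphi(h)|\le\|\Hat\varphi\|_\infty$ and summing the minimum over all $h$ gives the classical P\'olya--Vinogradov estimate $\ll\|\Hat\varphi\|_\infty\sqrt m\log m$. This already has the correct $\sqrt m$ but the wrong logarithm: it carries $\log m$ rather than the sharper $\log(|I|/\sqrt m)$ claimed. The whole point is therefore to beat the $\log m$.

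Second, I would introduce the smoothing. Fix an auxiliary length $L$ with $1\le L\le|I|$ and, for $0\le j<L$, let $I^{(j)}$ denote $I$ shifted by $j$. Since $I$ and $I^{(j)}$ differ in at most $2j$ points, $\big|\sum_{n\in I}\varphi(n)-\sum_{n\in I^{(j)}}\varphi(n)\big|\le 2j\,\|\varphi\|_\infty$, so averaging over $j$ yields
$$\sum_{n\in I}\varphi(n)=\sum_{n\bmod m}\varphi(n)\,w(n)+\mathcal O\big(L\,\|\varphi\|_\infty\big),\qquad w(n)=\frac{1}{L}\sum_{0\le j<L}\mathbf 1_{I^{(j)}}(n).$$
The weight $w$ is the (normalized) convolution $\frac1L\mathbf 1_{[0,L)}*\mathbf 1_I$, a trapezoid, so its Fourier transform factorizes and the cutoff $\Sigma_I(h)$ is now multiplied by the extra factor $\frac1L\min(L,m/(2|h|))=\min(1,m/(2L|h|))$. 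This additional decay at high frequencies is exactly what the smoothing buys, at the cost of the physical-side error $L\|\varphi\|_\infty$ --- which is why both $\|\varphi\|_\infty$ and $\|\Hat\varphi\|_\infty$, hence $c=\max(\|\varphi\|_\infty,\|\Hat\varphi\|_\infty)$, enter the final bound.

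Third, I would estimate the completed sum $\sum_n\varphi(n)w(n)=\frac1{\sqrt m}\sum_h\Hat\varphi(h)\overline{\Sigma_I(h)}\,\frac1L\overline{\Sigma_{[0,L)}(h)}$ by $\frac1{\sqrt m}\|\Hat\varphi\|_\infty\sum_h \min(|I|,m/(2|h|))\min(1,m/(2L|h|))$, splitting the frequency range into $|h|\lesssim m/|I|$, $m/|I|\lesssim|h|\lesssim m/L$, and $|h|\gtrsim m/L$. The three ranges contribute $\asymp m$, $\asymp m\log(|I|/L)$, and $\asymp m$ respectively, giving $\sum_n\varphi(n)w(n)\ll\|\Hat\varphi\|_\infty\sqrt m\,(1+\log(|I|/L))$. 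Combining with the error term gives $\big|\sum_{n\in I}\varphi(n)\big|\ll c\sqrt m(1+\log(|I|/L))+cL$, and choosing $L=\lceil\sqrt m\rceil$ (legitimate because $\sqrt m<|I|$) balances the two contributions and produces the logarithm $\log(|I|/\sqrt m)$. The hypothesis $\sqrt m<|I|$ also guarantees $|I|/\sqrt m>1$, so the logarithm is positive. The main obstacle is purely quantitative: turning the $\asymp$ and $\ll$ above into the explicit constant $4e^8$ inside the logarithm. This requires honest bookkeeping of the three-range frequency sum (in particular keeping the boundary terms at $|h|\approx m/|I|$ and $|h|\approx m/L$, and the $h=0$ term $|I|\cdot\frac1{\sqrt m}\cdot\|\Hat\varphi\|_\infty$) and a careful choice of $L$ so that the constants from the boundary error and from the completed sum combine cleanly; the generous constant $4e^8$ in the statement reflects exactly this slack.
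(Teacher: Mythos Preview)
The paper does not prove this statement at all: Theorem~\ref{sliding-sum} is merely quoted from \cite{FKM} as an input, with no argument given beyond the citation. So there is no ``paper's own proof'' to compare your proposal against.

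That said, your outline is essentially the argument of Fouvry--Kowalski--Michel--Raju--Rivat--Soundararajan in \cite{FKM}. The key idea there is exactly the shift-averaging (``sliding'') you describe: replace $\mathbf 1_I$ by $\frac1L\sum_{j<L}\mathbf 1_{I^{(j)}}$ at a cost of $O(L\|\varphi\|_\infty)$ on the physical side, so that on the frequency side the geometric-series kernel $\Sigma_I(h)$ acquires an extra damping factor $\min(1,m/(2L|h|))$, and then choose $L\asymp\sqrt m$ to balance. Your three-range frequency split and the resulting $\sqrt m\log(|I|/\sqrt m)$ are correct, and your explanation of why both $\|\varphi\|_\infty$ and $\|\Hat\varphi\|_\infty$ must appear (hence the maximum $c$) is right. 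The only part left imprecise is, as you yourself note, the explicit constant $4e^8$ inside the logarithm; this is bookkeeping rather than a missing idea. For the purposes of the present paper none of this is needed---one simply invokes \cite[Theorem~1.1]{FKM}.
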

Here $c$ is a bounded quantity as can be seen from \cite[equation (2.6) ]{FKM}. To be more precise with 
\begin{align*}
  \varphi(n)=\chi(f(n))e_p(g(n))
\end{align*}
with $f=f_1/f_2$ and $g=g_1/g_2$, we have 
\begin{align*}
    c\ll (\deg(f_1)+\deg(f_2)+\deg(g_1)+\deg(g_2))^2.
\end{align*}

To prove our main theorems in Section 2, we need the following two auxiliary lemmas.
\begin{lemma}\label{Aweil}
    Let $\psi$ be a non-trivial additive character on $\mathbb{F}_p$, {\it i.e.} $\psi(x)=e_p{(ax)}$ for some $a\in {\mathbb{F}_p}^{*}$ and $p$ be a prime. Then for a non-zero polynomial $G(X)\in \mathbb{Z}[X]$ of degree $d$ we have the following 
    \begin{equation*}
        \sum_{n\in I} \psi(G(n))= \mathcal{O}_d (\sqrt{p}\log p),
    \end{equation*}
  which is non-trivial provided
    $
    \sqrt{p}\log p < |I| < p.
    $

\end{lemma}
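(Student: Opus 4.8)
The plan is to estimate $\sum_{n\in I}\psi(G(n))$ via the completion technique, reducing the incomplete sum over the interval $I$ to complete exponential sums modulo $p$, which are then controlled by the classical Weil bound for one-variable exponential sums. First I would write the indicator of $I$ using additive characters: for $I\subseteq\{0,1,\dots,p-1\}$ we have
\begin{align*}
\sum_{n\in I}\psi(G(n))=\sum_{n=0}^{p-1}\psi(G(n))\,\mathbf 1_{n\in I}
=\sum_{n=0}^{p-1}\psi(G(n))\cdot\frac1p\sum_{h=0}^{p-1}\Bigl(\sum_{u\in I}e_p(-hu)\Bigr)e_p(hn).
\end{align*}
Interchanging the order of summation gives
\begin{align*}
\sum_{n\in I}\psi(G(n))=\frac1p\sum_{h=0}^{p-1}\Bigl(\sum_{u\in I}e_p(-hu)\Bigr)\sum_{n=0}^{p-1}\psi(G(n))e_p(hn).
\end{align*}
The inner sum $\sum_{n\bmod p}e_p\bigl(aG(n)+hn\bigr)$ is a complete exponential sum attached to the polynomial $aG(X)+hX$, which has degree $d\ge 1$ (since $a\in\mathbb F_p^\times$ and $\deg G=d$, the degree is unaffected by the linear term as long as $d\ge 2$, and equals $1$ only in the degenerate linear cases which we can treat separately or which are excluded by $d>1$ in the applications). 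By the Weil bound, $\bigl|\sum_{n\bmod p}e_p(aG(n)+hn)\bigr|\le (d-1)\sqrt p$.

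Next I would bound the "weight" $\bigl|\sum_{u\in I}e_p(-hu)\bigr|$ by the standard geometric-series estimate $\min\bigl(|I|,\;\tfrac{1}{2\|h/p\|}\bigr)$, where $\|\cdot\|$ denotes distance to the nearest integer. Summing over $h$, the term $h=0$ contributes $\frac1p\cdot|I|\cdot\bigl|\sum_n\psi(G(n))\bigr|\ll \frac{|I|}{p}(d-1)\sqrt p$, which is $\ll_d \sqrt p$ since $|I|<p$; actually this already shows the main term is small, but to keep the dependence clean one simply notes it is $O_d(\sqrt p)$. For $h\ne 0$ we get
\begin{align*}
\frac1p\sum_{h=1}^{p-1}\frac{(d-1)\sqrt p}{2\|h/p\|}\ll_d \frac{\sqrt p}{p}\sum_{1\le h\le p/2}\frac ph\ll_d \sqrt p\log p,
\end{align*}
using $\sum_{h\le p/2}1/h\ll\log p$. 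Combining, $\sum_{n\in I}\psi(G(n))=O_d(\sqrt p\log p)$. The non-triviality claim is then immediate: the trivial bound for the sum is $|I|$, so the estimate $O_d(\sqrt p\log p)$ is an improvement exactly when $\sqrt p\log p<|I|$, and the hypothesis $|I|<p$ is what makes $I$ a genuine interval inside $\mathbb F_p$.

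The only real subtlety — and the step I would be most careful about — is the degree bookkeeping in the Weil bound when $h$ is chosen so that $aG(X)+hX$ could, in principle, drop in degree or become constant. Since $\deg G=d>1$, adding $hX$ never lowers the degree, so $aG(X)+hX$ is a polynomial of degree $d$ for every $h$, never constant, and Weil applies uniformly with the constant $d-1$; if one wanted the lemma for $d=1$ one would have to isolate the $h\equiv -a \pmod p$ term where the phase becomes constant and contributes $\pm p$, but that case is outside the hypotheses here. Everything else is the routine completion estimate, so the proof is short once this point is noted.
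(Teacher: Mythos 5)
Your proof is correct and follows essentially the same route as the paper: completion of the incomplete sum via the Fourier expansion of the interval indicator, the Weil bound $(d-1)\sqrt p$ for the complete sums $\sum_{n\bmod p}e_p(aG(n)+hn)$, and the standard $\min\bigl(|I|,\tfrac{1}{2\|h/p\|}\bigr)$ estimate summed over $h$ to produce the $\sqrt p\log p$ term. Your added remark on the degree bookkeeping (the possible degeneration of $aG(X)+hX$ when $d=1$) is a point the paper's proof passes over silently, and is worth keeping.
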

\begin{proof}
   Let $\mathcal{G} = \sum_{n \in I} \psi(G(n))$. Using completing technique,
$$
\mathcal{G}= \frac{1}{p}\sum_{b\bmod p} \lambda\left(\frac{b}{p}\right)\cdot S\left(\frac{b}{p}\right), 
$$
where
$$
S\left(\frac{b}{p}\right)= \sum_{x\bmod p}\psi(G(x))\cdot e\left(\frac{-bx}{p}\right).
$$
At this use Weil's estimate \cite{weil} to get $S\big(\frac{b}{p}\big)\ll \sqrt{p}(d-1)$ and using standard estimate $\lambda\big(\frac{b}{q}\big)\ll \min\{|I|, \frac{1}{2||\frac{b}{p}||}\} $. Hence,
\begin{align*}
    \mathcal{G} & \ll \frac{1}{p}\sum_{b\bmod p} \min\{|I|, \frac{1}{2||\frac{b}{p}||}\}.\sqrt{p}(d-1)\\ &\ll \frac{d-1}{\sqrt{p}}\{|I| + p\log p\}\\
    &\ll \sqrt{p}\log p.
\end{align*}

\end{proof}
\begin{lemma}\label{lemma-3.3}
    Let $G(X)\in \mathbb{Z}[X]$ be a polynomial of degree $d$. Then we have the following asymptotic 
\begin{equation*}
    \sum_{\substack{n\in I\\(G(n),p)=1}} 1 = |I| + \mathcal{O}_{d}(1),
\end{equation*}
provided
$ |I| < p.
    $

\end{lemma}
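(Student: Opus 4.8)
The plan is to reduce this weighted count to a root-counting statement modulo $p$. First I would split off the coprimality condition: for every integer $n$ either $p\mid G(n)$ or $(G(n),p)=1$, so
\begin{align*}
\sum_{\substack{n\in I\\(G(n),p)=1}} 1 = \sum_{n\in I} 1 - \#\{\,n\in I : p\mid G(n)\,\} = |I| - \#\{\,n\in I : p\mid G(n)\,\}.
\end{align*}
Hence it suffices to show that the number of $n\in I$ with $p\mid G(n)$ is $\mathcal{O}_d(1)$.

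For that I would invoke the hypothesis $|I|<p$: since $I$ is an interval of length strictly less than $p$, its elements are pairwise incongruent modulo $p$, so $\#\{n\in I : p\mid G(n)\}$ is bounded above by the number of zeros in $\mathbb{F}_p$ of the reduction $\overline{G}\in\mathbb{F}_p[X]$. As $G$ has degree $d$ with leading coefficient a fixed nonzero integer (coprime to $p$ for all sufficiently large $p$, which is the regime of interest), $\overline{G}$ is a nonzero polynomial of degree at most $d$, and therefore has at most $d$ roots in $\mathbb{F}_p$ by Lagrange's theorem. Combining the two displays gives $\sum_{\substack{n\in I,\ (G(n),p)=1}} 1 = |I| + \mathcal{O}_d(1)$, with an implied constant that may be taken equal to $d$.

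The one point to keep an eye on — and it is not really an obstacle — is the degenerate possibility $\overline{G}\equiv 0$, i.e.\ that $p$ divides every coefficient of $G$; in that case the left-hand side is $0$ and the asymptotic fails. This never occurs in our applications: the polynomials $G$ that appear are built from the monic polynomial $F$ (for instance $F(X+1)-F(X)$ and related expressions), so their leading coefficients are nonzero integers independent of $p$, and hence nonzero modulo $p$ once $p$ is large. Thus the lemma functions essentially as a bookkeeping device that lets us replace the coprimality restriction $(G(n),p)=1$ by the full interval length $|I|$ at the cost of a bounded error — precisely what the proofs of Theorems \ref{main-1} and \ref{additivechar} need when passing between $\sum_{n\in I}$ and $\sum_{n\in I,\,(G(n),p)=1}$.
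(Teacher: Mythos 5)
Your proof is correct and is essentially the same as the paper's: the paper writes the coprimality condition as a M\"obius sum $\sum_{l\mid(G(n),p)}\mu(l)$ over the divisors of the prime $p$, which collapses to exactly your identity $|I|-\#\{n\in I: p\mid G(n)\}$, and then bounds the latter count by $d$ via the root count of $\overline{G}$ in $\mathbb{F}_p$. If anything you are slightly more careful, since you make explicit the role of $|I|<p$ (pairwise incongruence mod $p$) and the degenerate case $\overline{G}\equiv 0$, which the paper passes over silently.
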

\begin{proof}
We have 
\begin{align*}
   \sum_{\substack{n\in I\\(F(n),p)=1}} 1&=\sum_{n \in I} \sum_{l|(F(n),p)}\mu(l) \\ &= \sum_{l|p}\mu(l)\sum_{\substack{n\in I \\ F(n)\equiv 0\bmod l}}1\\ & = |I| - \sum_{\substack{n\in I \\ F(n) \equiv 0 \bmod p}}1. 
\end{align*}
The later sum is less than equal to $d$. This concludes the proof.
\end{proof}

\section{Proof of Theorem \ref{main-1}}\label{sec-5}
We start with reducing the sum into three parts 
\begin{align*}
&\sum_{n\in I}|\chi(F(n))-\chi(F(n+1))|^{2m}\\
&=\mathcal{M}_1+\mathcal{M}_2+\mathcal{M}_3,
\end{align*}
where
\begin{align*}
\mathcal{M}_1=\sum_{\substack{n\in I\\(F(n),p)=1\\(F(n+1),p)\neq 1}}1~\text{and}~
\mathcal{M}_2=\sum_{\substack{n\in I \\(F(n),p)\neq 1\\(F(n+1),p)=1}}1,
\end{align*}
\begin{align*}
\mathcal{M}_3=\sum_{\substack{n\in I\\(F(n),p)=1\\(F(n+1),p)=1}}|\chi(F(n))-\chi(F(n+1))|^{2m}.
\end{align*}
First notice that 
\begin{align*}
\mathcal{M}_1+\mathcal{M}_2=N-\sum_{\substack{n\in I\\(F(n),p)=1\\(F(n+1),p)=1}}1-\sum_{\substack{n\in I\\(F(n),p)\neq 1\\(F(n+1),p)\neq 1}}1.
\end{align*}
Here
\begin{align*}
\sum_{\substack{n\in I\\(F(n),p)=1\\(F(n+1),p)=1}}1&=\sum_{\substack{n\in I\\(F(n)F(n+1),p)=1}}1.\\
\end{align*}
We use Lemma \ref{lemma-3.3} with $G(n)=F(n)F(n+1)$ to conclude the above sum is 
$
|I| + \mathcal{O}_{d}(1)
$
provided
$
     |I| < p.
$

 Also
\begin{align*}
\sum_{\substack{n\in I\\(F(n),p)\neq 1\\(F(n+1),p)\neq 1}}1\leq\sum_{\substack{n\in I\\F(n)\equiv F(n+1) \bmod p}}1 =\mathcal{O}_d(1).
\end{align*}

Combining all we have 
\begin{align*}
\mathcal{M}_1+\mathcal{M}_2=
\mathcal{O}_{d}(1),~ \text{provided} ~|I|<p.
\end{align*}
Now $\mathcal{M}_3$ can be written as 
\begin{align*}
\mathcal{M}_3&=\sum_{\substack{n\in I\\(F(n),p)=1\\(F(n+1),p)=1}}|\chi(F(n))-\chi(F(n+1))|^{2m}.
\end{align*}
As discussed in the Section \ref{sketch}, equation \eqref{sum-today}, we will focus on
\begin{align}\label{sumtoday1}
    \frac{1}{2^{k}}\sum_{\substack{n\in I\\(F(n),p)=1\\(F(n+1),p)=1}}\left(\chi(F(n))\overline{\chi}(F(n+1))+\chi(F(n+1))\overline{\chi}(F(n))\right)^k.
\end{align}
Expanding it we have 
\begin{align*}
    &\frac{1}{2^{k}}\sum_{\substack{n\in I\\(F(n),p)=1\\(F(n+1),p)=1}}\sum_{r=0}^{k}\binom{k}{r}\chi\left(\frac{F(n+1)}{F(n)}\right)^{k-2r}\\&
    =\frac{1}{2^{k}}\sum_{\substack{n\in I\\(F(n),p)=1\\(F(n+1),p)=1}}\sum_{\substack{r=0\\ k\equiv 2r \bmod t}}^{k}\binom{k}{r}+\frac{1}{2^{k}}\sum_{\substack{n\in I\\(F(n),p)=1\\(F(n+1),p)=1}}\sum_{\substack{r=0\\ k\not \equiv 2r \bmod t}}^{k}\binom{k}{r}\chi\left(\frac{F(n+1)}{F(n)}\right)^{k-2r}\\&
    =\frac{1}{2^{k}}\sum_{\substack{r=0\\ k\equiv 2r \bmod t}}^{k}\binom{k}{r}\sum_{\substack{n\in I\\(F(n),p)=1\\(F(n+1),p)=1}}1 + \frac{1}{2^{k}}\sum_{\substack{r=0\\ k\not \equiv 2r \bmod t}}^{k}\binom{k}{r}\sum_{\substack{n\in I\\(F(n),p)=1\\(F(n+1),p)=1}}\chi\left(\frac{F(n+1)}{F(n)}\right)^{k-2r},
\end{align*}
where Ord$(\chi)=t$. The first and the second part of the sum contribute in the main term and in the error term respectively. 
\subsection{Main term:} Let us denote $u_k \coloneqq c_k\sum\limits_{\substack{r=0\\ k\equiv 2r \pmod t}}^{k}\binom{k}{r}.$ Then our main term will be
$$
2^m\left(1-\frac{u_1}{2}+\frac{u_2}{2^2}-\frac{u_3}{2^3}+\cdot\cdot\cdot\right)\sum_{\substack{n\in I\\(F(n),p)=1\\(F(n+1),p)=1}}1.
$$
Using Lemma \ref{lemma-3.3} we get the main term 
$$
2^m\left(1-\frac{u_1}{2}+\frac{u_2}{2^2}-\frac{u_3}{2^3}+\cdot\cdot\cdot\right)\cdot |I|.
$$
Of course, it also provides some error term, which is 
$
\mathcal{O}_{d}(1).$

\subsection{Error part}
By Theorem \ref{sliding-sum}, if we take $\varphi(n)=\chi^{k-2r}\left(\frac{f(n+1)}{f(n)}\right)$, we have 
\begin{align*}
\sum_{n\in I}\chi\left(\frac{F(n+1)}{F(n)}\right)^{k-2r}\ll c\sqrt{p}\log\left(\frac{4e^8|I|}{p^{1/2}}\right)\ll \sqrt{p}\log p,
\end{align*}
which is non-trivial when $\sqrt{p}\log p<|I|<p$.

Hence combining $\mathcal{M}_1$, $\mathcal{M}_2$ and $\mathcal{M}_3$ we have the result.

\section{Proof of Theorem \ref{additivechar} }
The proof is similar to the proof of Theorem \ref{main-1}. In this case, same as \eqref{sumtoday1}, we will focus on the following
\begin{align*}
    \frac{1}{2^k}\sum\limits_{n\in I}\left(\psi(F(n))\overline{\psi(F(n+1))}+\psi(F(n+1))\overline{\psi(F(n))}\right)^k.
\end{align*}
Expanding the power we have,
\begin{align*}
    &\frac{1}{2^k}\sum\limits_{n\in I}\sum_{r=0}^{k}\binom{k}{r}\psi((k-2r)(F(n+1)-F(n)))\\&
    =\frac{1}{2^k}\sum\limits_{n\in I}\sum_{\substack{r=0\\ 2r \equiv k (p)}}^{k}\binom{k}{r}+\frac{1}{2^k}\sum\limits_{n\in I}\sum_{\substack{r=0\\ 2r\not \equiv k (p)}}^{k}\binom{k}{r}\psi\left((k-2r)(F(n+1)-F(n)\right)\\&
    =\frac{1}{2^k}\sum_{\substack{r=0\\ 2r \equiv k (p)}}^{k}\binom{k}{r}\sum\limits_{n\in I}1  +\frac{1}{2^k}\sum_{\substack{r=0\\ 2r\not \equiv k (p)}}^{k}\binom{k}{r}\sum\limits_{n\in I}\psi\left((k-2r)(F(n+1)-F(n)\right).
\end{align*}
\subsection{Main term}
Let us denote $v_k \coloneqq c_k\sum\limits_{\substack{r=0\\ 2r \equiv k (p)}}^{k}\binom{k}{r}.$ Then our main term will be
\begin{align*}
2^m\left(1-\frac{v_1}{2}+\frac{v_2}{2^2}-\frac{v_3}{2^3}+\cdot\cdot\cdot\right)\cdot |I|.
\end{align*}
\subsection{Error term}
By Lemma \ref{Aweil}, we have 
\begin{align*}
    \sum\limits_{n\in I}\psi\left((k-2r)(F(n+1)-F(n)\right)\ll\sqrt{p}\log{p},
\end{align*}
which is nontrivial when  $\sqrt{p}\log p<|I|<p$.
Combining the main term and error term we conclude the proof of Theorem \ref{additivechar}.
\section{An example}
As an example for the additive case we consider binomial coefficients as our $F(n)$. More precisely, we take $F(n)=\binom{n}{d+1}$. So
\begin{align*}
    F(n+1) - F(n) = \binom{n+1}{d+1}-\binom{n}{d+1}=\binom{n}{d}\not\equiv 0 \bmod p,
\end{align*}
for $d+1<n<p$. Hence 
$
\sum_{n \in I}|\psi(\binom{n}{d+1})-\psi(\binom{n+1}{d+1})|^{2m}
$ satisfies the formula given in Theorem \ref{additivechar}. To apply Weil bound, we treat $\frac{1}{d!}$ as inverse of $d! \bmod p$.

\section{Acknowledgment}
The first author would like to thank Department of Mathematics, Thapar Institute of Engineering and Technology and the second author would like to thank
Chennai Mathematical Institute for providing excellent working conditions. During
the preparation of this article, D.M. was supported by the National Board of Higher
Mathematics post-doctoral fellowship (No.: 0204/10/(8)/2023/R$\&$D-II/2778).


\end{document}